\newtheorem{theorem}{Theorem}[section]
\newtheorem{lemma}[theorem]{Lemma}
\numberwithin{equation}{section}
\newif\ifcolor
\keywords{$p$-Laplacian, H\"older continuity.} \subjclass[2010]{35J15, 35J92.}
\begin{document}
	
	\raggedbottom
	
	\title{H\"older continuity for the $p$-Laplace equation using a differential inequality }
	
	\author[F. H\o eg]{Fredrik Arbo H\o eg}
	\address{Department of Mathematical Sciences\\
		Norwegian University of Science and Technology\\
		NO-7491 Trondheim\\ Norway}
	\email{fredrik.hoeg@ntnu.no}

	\maketitle
	
	
	\begin{abstract}
		We study H\"older continuity for solutions of the $p$-Laplace equation. This is established through a method involving an ordinary differential inequality, in contrast to the classical proof of the De Giorgi-Nash-Moser Theorem which uses iterations of an inequality through concentric balls.
	\end{abstract}

	\section{Introduction}
In this paper, we study  solutions of the $p$-Laplace equation, 
\begin{equation}
\Delta_p u =\text{div}\left( |\nabla u|^{p-2} \nabla u  \right)=0, \quad 1 < p < \infty.
\label{plaplace}
\end{equation}
It is a nonlinear second order partial differential equation in divergence form. The equation is \textit{degenerate} for $p>2$ and \textit{singular} for $p<2$.

Due to the singularity and nonlinearity of the $p$-Laplace operator, we cannot always expect solutions of equation \eqref{plaplace} to be smooth. We say that $u$ is a weak solution of equation \eqref{plaplace} in a domain $\Omega \subset \mathbb{R}^n$ provided
\begin{align*}
\int_{\Omega} |\nabla u|^{p-2}\left \langle  \nabla u, \nabla \phi \right \rangle \, dx=0
\end{align*}
for all smooth test functions $\phi$. 

We will show H\"older continuity for weak solutions of the $p$-Laplace equation. This is not a new result, see for example [LU]. We shall replace the iteration in
DeGiorgi's method by a single differential inequality. Such a device was
used for linear equations by P. Tilli in [T] . We shall apply the same
strategy here.

Throughout the text, we will restrict the range of values for $p$ to $$1<p<n.$$ 
For $p > n$, in fact all functions in $W^{1,p}_\text{loc} (\Omega)$ are continuous, see Theorem 7.17 in [GT]. For $p=n$, the proof is based on Morrey's Lemma, which can be found in Theorem 7.19 in [GT]. 

A crucial step in proving H\"older continuity is the following \textit{Oscillation Theorem.} If the Oscillation Theorem is true, the H\"older continuity follows as in page 3 in [T].

\begin{theorem}\textbf{The Oscillation Theorem}
	
	Let $u$ be a bounded weak solution of 
	$$\Delta_p u =0 \quad \text{in} \, B_4.$$
	If $$|\{ u \leq 0 \} \cap B_1| \geq \frac{1}{2}|B_1|, $$ then
	$$\sup_{B_1} u^+ \leq C |\{   u>0 \} \cap B_2|^{\alpha} \sup_{B_4} u^+$$
	\label{osclemma}
	for some $\alpha \in (0,1)$.
\end{theorem}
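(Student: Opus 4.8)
The plan is to reduce everything to a quantitative statement about super-level sets and then replace the De Giorgi iteration by a single integration of an ordinary differential inequality, exactly in the spirit of Tilli. By the homogeneity of \eqref{plaplace} the function $u/M$ is again a solution, where $M=\sup_{B_4}u^+$; assuming $M>0$ (otherwise there is nothing to prove) I normalize $M=1$, so that $0\le u^+\le 1$ on $B_4$. Writing $a=|\{u>0\}\cap B_2|$, the conclusion becomes $\sup_{B_1}u^+\le C\,a^{\alpha}$, and I may assume $a$ is as small as I like, since otherwise the bound holds trivially by enlarging $C$ (recall $u^+\le 1$). The two quantities I track over the level parameter $t\in[0,1]$ are the measure $\mu(t)=|\{u>t\}\cap B_1|$ and the truncated energy $D(t)=\int_{B_1}|\nabla (u-t)^+|^p\,dx$, both nonincreasing, with $-\mu'(t)=\int_{\{u=t\}\cap B_1}|\nabla u|^{-1}\,d\mathcal H^{n-1}$ and $-D'(t)=\int_{\{u=t\}\cap B_1}|\nabla u|^{p-1}\,d\mathcal H^{n-1}$ by the coarea formula.

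First I would record the energy budget. Since $u$ is a solution, $(u-t)^+$ is a nonnegative subsolution, so testing the weak formulation with $\eta^p(u-t)^+$ for a cutoff $\eta$ equal to $1$ on $B_1$, supported in $B_2$, with $|\nabla\eta|\le C$, gives the Caccioppoli inequality
\begin{equation*}
\int_{B_1}|\nabla (u-t)^+|^p\,dx\le C\int_{B_2}((u-t)^+)^p\,dx .
\end{equation*}
Taking $t=0$ and using $u^+\le 1$, so that $(u^+)^p\le \chi_{\{u>0\}}$, yields the crucial smallness
\begin{equation*}
D(0)=\int_{B_1}|\nabla u^+|^p\,dx\le C\,|\{u>0\}\cap B_2|=C\,a .
\end{equation*}
Thus the \emph{total} energy available above level $0$ is controlled by $a$, and since $D$ is nonincreasing this is the quantity that will force the oscillation to be small.

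Next comes the step that replaces the iteration. I would invoke De Giorgi's isoperimetric lemma on $B_1$: for $0\le h<k$,
\begin{equation*}
(k-h)\,\mu(k)\le \frac{C(n)}{|\{u\le h\}\cap B_1|}\int_{\{h<u<k\}\cap B_1}|\nabla u|\,dx .
\end{equation*}
Here the hypothesis is decisive: because $\{u\le 0\}\cap B_1$ has measure at least $\tfrac12|B_1|$ and $\{u\le h\}\supseteq\{u\le 0\}$, the denominator satisfies $|\{u\le h\}\cap B_1|\ge \tfrac12|B_1|$ \emph{for every} $h\ge 0$. This uniform nondegeneracy is precisely what lets me avoid shrinking balls: the constant never deteriorates. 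Letting $k\downarrow h$, the coarea formula together with H\"older's inequality on the level set $\{u=t\}$ (splitting $1=|\nabla u|^{(p-1)/p}\cdot|\nabla u|^{-(p-1)/p}$) produces the differential inequality
\begin{equation*}
\mu(t)\le C\,\bigl(-D'(t)\bigr)^{1/p}\bigl(-\mu'(t)\bigr)^{(p-1)/p}\qquad\text{for a.e. }t\in(0,1).
\end{equation*}
Raising to the power $p$ and integrating from $0$ to $S:=\sup_{B_1}u^+$, another application of H\"older (with exponents $p$ and $p/(p-1)$) against the budgets $\int_0^S(-D')\le D(0)\le Ca$ and $\int_0^S(-\mu')=\mu(0)\le a$ gives $\int_0^S\mu(t)\,dt\le C\,a$.

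The main obstacle is the final extraction of the power $\alpha$ from this integral inequality, i.e.\ converting $\int_0^S\mu\,dt\le Ca$ into $S\le Ca^\alpha$. A bare monotonicity bound on $\mu$ is insufficient, since $\mu$ could in principle collapse quickly; I must prevent this by feeding in one more estimate. The plan is to use the Sobolev--Poincar\'e inequality for $(u-t)^+$, which vanishes on at least half of $B_1$, namely $\bigl(\int_{B_1}((u-t)^+)^{p^*}\bigr)^{p/p^*}\le C\,D(t)$ with $p^*=np/(n-p)$, in order to bound $D(t)$ below by a power of $\mu$ and thereby eliminate $D$ from the differential inequality. This should yield a closed autonomous differential inequality for $\mu$ alone whose integration forces $\mu(S)=0$ at a level $S\le C\,a^{\alpha}$ with an explicit $\alpha\in(0,1)$ depending only on $n$ and $p$; the delicate points are tracking the $p$-dependent H\"older exponents so that the homogeneities match, and justifying the absolute continuity of $\mu$ and $D$ needed for the coarea identities and the fundamental theorem of calculus. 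Once $\mu(S)=0$, the set $\{u>S\}\cap B_1$ is null, so $\sup_{B_1}u^+\le S\le C\,a^\alpha=C\,|\{u>0\}\cap B_2|^{\alpha}$, which is the claim.
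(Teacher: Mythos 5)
Your first four steps are correct, and they follow a genuinely different route from the paper (level sets and the coarea formula plus De Giorgi's isoperimetric lemma, rather than the paper's one-parameter family of shrinking balls): the normalization $M=1$, the Caccioppoli budget $D(0)\le Ca$, the differential inequality $\mu(t)\le C\,(-D'(t))^{1/p}(-\mu'(t))^{(p-1)/p}$, and the integrated bound $\int_0^S\mu\,dt\le Ca$ are all valid, modulo the absolute-continuity caveats you flag. But note what that last bound says: $\|u^+\|_{L^1(B_1)}\le Ca$. It is only an $L^1$ estimate, and this is where your proof actually ends; the step you defer as ``the main obstacle'' is the entire theorem.

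The mechanism you propose for that step cannot be made to work. Sobolev--Poincar\'e bounds $D(t)$ \emph{from below}, e.g. $D(t)\ge c\,s^p\mu(t+s)^{p/p^*}$ by Chebyshev, but your differential inequality involves $-D'(t)$, and a lower bound on $D$ gives no pointwise control whatsoever on its derivative, so $D$ cannot be ``eliminated'' to produce a closed autonomous inequality for $\mu$. Worse, the full list of facts you permit yourself (the budget $D(0)\le Ca$, $\mu(0)\le a$, the coarea inequality, and Sobolev--Poincar\'e at every level) is simply \emph{consistent} with $\sup_{B_1}u^+$ of order one while $a\to 0$: take the data $\mu(t)=a e^{-t/\delta}$ on $(0,1)$ with $\delta= c\,a^{1/n}$, $-D'(t)=c\,\mu^p(-\mu')^{1-p}=c\,a\delta^{p-1}e^{-t/\delta}$, and $D(0)=Ca$; then $\int_0^1(-D')\sim a\delta^{p}\ll a$, and since $\int_t^1(s-t)^{p^*-1}\mu(s)\,ds\le \Gamma(p^*)\,a\,\delta^{p^*}e^{-t/\delta}$ one checks $\bigl(\int_{B_1}((u-t)^+)^{p^*}\bigr)^{p/p^*}\lesssim a^{p/p^*}\delta^p\lesssim a\le CD(t)$, so every inequality on your list holds, yet $\mu>0$ on all of $(0,1)$. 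Hence no amount of recombining these ingredients can yield $S\le Ca^\alpha$; indeed every H\"older/Young pairing of them collapses back to the same $L^1$ bound. What is missing is precisely the paper's key device: the ODE variable must couple the rising level with a \emph{shrinking domain}, $g(\rho)=\int_{B_{2-\rho}}|\nabla(u-k\rho)^+|^{p-1}(u-k\rho)^+\,dx$, so that $-g'$ contains a boundary integral over $\partial B_{2-\rho}$ in addition to the level term $k\,b(\rho)$; the equal-radius Caccioppoli inequality (Lemma \ref{cacc2}) then bounds the bulk energy by exactly that boundary term, and together with Theorem \ref{poincarezero} this closes the inequality $g^p\le C(-g')^{(p-1)q}$, which integrates to $g(1)=0$ for the right choice of $k$. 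With the domain frozen at $B_1$ there is no analogue of Lemma \ref{cacc2}, and differentiation in the level variable alone cannot self-improve an $L^1$ bound into an $L^\infty$ bound.
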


The paper is organized as follows. First, we list some preliminaries for the study. Then, we focus our attention on the Oscillation Theorem. To prove this, we derive an ordinary differential inequality which is used along with a modified Caccioppoli inequality. This is in contrast to the classical proofs in for example [Dg].

\section{Preliminaries}

%

\textbf{Caccioppoli type inequalities.} We give some inequalities needed later for solutions of the $p$-Laplace equation. 

\begin{lemma}\textbf{Caccioppoli inequality:}
	Let $u$ be a weak solution of \eqref{plaplace} in a domain $\Omega$ and let $r>0$ with $B_{2r} \subset \Omega$. Then
	\begin{align*}
	\int_{B_r} |\nabla u|^p \, dx \leq p^p r^{-p} \int_{B_{2r}} |u|^p \, dx. 
	\end{align*}
\label{cacc1}
\end{lemma}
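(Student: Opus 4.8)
The plan is to test the weak formulation against $\phi = \eta^p u$, where $\eta$ is a Lipschitz cutoff, and then to extract the estimate by H\"older's inequality. First I would fix the cutoff $\eta(x) = \min\{1, \max\{0, (2r-|x|)/r\}\}$, which equals $1$ on $B_r$, vanishes outside $B_{2r}$, and satisfies $|\nabla \eta| \le 1/r$ almost everywhere. Since $u \in W^{1,p}_{\mathrm{loc}}(\Omega)$ and $\eta$ is Lipschitz with compact support in $B_{2r} \subset \Omega$, the product $\phi = \eta^p u$ lies in $W^{1,p}$ with compact support and may be used as a test function after the standard density argument approximating it by smooth functions.

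Next I would compute $\nabla \phi = \eta^p \nabla u + p\, \eta^{p-1} u\, \nabla \eta$ and substitute into the weak formulation, which yields
$$\int_{B_{2r}} \eta^p |\nabla u|^p \, dx = -p \int_{B_{2r}} \eta^{p-1} u \, |\nabla u|^{p-2} \langle \nabla u, \nabla \eta \rangle \, dx.$$
Writing $I := \int_{B_{2r}} \eta^p |\nabla u|^p \, dx$, I would bound the right-hand side using $\langle \nabla u, \nabla \eta \rangle \le |\nabla u|\, |\nabla \eta|$ and then apply H\"older's inequality with conjugate exponents $p/(p-1)$ and $p$, grouping the factors as $\bigl(\eta^{p-1}|\nabla u|^{p-1}\bigr)$ and $\bigl(|u|\, |\nabla \eta|\bigr)$. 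This gives
$$I \le p \, I^{(p-1)/p} \left( \int_{B_{2r}} |u|^p |\nabla \eta|^p \, dx \right)^{1/p}.$$

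Dividing through by $I^{(p-1)/p}$ (the case $I = 0$ being trivial) produces $I^{1/p} \le p \left( \int_{B_{2r}} |u|^p |\nabla \eta|^p \, dx \right)^{1/p}$, hence $I \le p^p \int_{B_{2r}} |u|^p |\nabla \eta|^p \, dx$. Inserting $\int_{B_r} |\nabla u|^p \, dx \le I$ together with $|\nabla \eta| \le 1/r$ then yields exactly the claimed inequality. I expect the main obstacle to be two-fold: first, the rigorous justification that $\eta^p u$ is an admissible test function despite the weak formulation being stated for smooth $\phi$, which requires a density argument and some care since $\eta$ is only Lipschitz; and second, the decision to use H\"older rather than Young's inequality at the key step. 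Young's inequality would introduce an absorbable term and an unspecified constant, whereas the direct H\"older estimate followed by division by $I^{(p-1)/p}$ is precisely what produces the sharp constant $p^p$ recorded in the lemma.
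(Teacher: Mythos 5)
Your proposal is correct and follows essentially the same route as the paper, whose entire proof is the one-line instruction to use the test function $\phi = u\zeta^p$ with $\zeta$ supported in $B_{2r}$; your argument fills in exactly those details. In particular, your choice of cutoff with $|\nabla \eta| \le 1/r$, the H\"older step with exponents $p/(p-1)$ and $p$, and the division by $I^{(p-1)/p}$ are the standard way to realize the paper's sketch and recover the stated constant $p^p r^{-p}$.
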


\begin{proof}
Use the test function $\phi = u \zeta^p$, where $\zeta$ is a suitable
function with support in $B_{2r}$.

\end{proof}

We use  a variant of the Caccioppoli inequality, where the ball on the
left-hand side has \emph{the same radius} as the sphere on the
right-hand side. In the next Lemma, $(u-k)^+ = \text{max}\left \{ (u-k),0\right\}$. 

\begin{lemma}
Assume $u$ is a weak solution of equation \eqref{plaplace} in $B_R$. Then
\begin{align*}
\int_{B_r} \left | \nabla (u-k)^+  \right|^{p} \, dx \leq \int_{\partial B_r} \left | \nabla (u-k)^+   \right|^{p-1} (u-k)^+ \, ds.
\end{align*}
for almost every $r<R$. 
\label{cacc2}
\end{lemma}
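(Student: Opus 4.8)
The plan is to localize the weak formulation with a radial cutoff and then let the cutoff collapse onto the sphere $\partial B_r$, recovering the boundary integral through the coarea formula. Write $v = (u-k)^+$ and $w = |\nabla u|^{p-2}\nabla u$, so that the weak formulation of \eqref{plaplace} reads $\int_{B_R} \langle w, \nabla \phi\rangle\, dx = 0$ for every $\phi \in C_c^\infty(B_R)$. The key algebraic fact is that $\nabla v = \chi_{\{u > k\}}\nabla u$, which gives the pointwise identity $\langle w, \nabla v\rangle = \chi_{\{u>k\}}|\nabla u|^p = |\nabla v|^p$ almost everywhere.

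For $\rho < r < R$ I would test the equation with $\phi = v\, \eta$, where $\eta(x) = g(|x|)$ is radial, equal to $1$ on $B_\rho$, equal to $0$ outside $B_r$, and affine in between, so that $\nabla \eta = -\frac{1}{r-\rho}\frac{x}{|x|}$ on the annulus $B_r \setminus B_\rho$. Since $\phi$ has compact support in $B_R$ it is admissible, and expanding $\nabla(v\eta) = \eta\nabla v + v\nabla\eta$ together with the identity above yields
\[
\int_{B_R} \eta\,|\nabla v|^p\, dx = \frac{1}{r-\rho}\int_{B_r\setminus B_\rho} v\,\Big\langle w, \frac{x}{|x|}\Big\rangle\, dx.
\]

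Now I would let $\rho \to r$. On the left, $0 \le \eta \le 1$ and $\eta \to \chi_{B_r}$ almost everywhere, so dominated convergence gives $\int_{B_r}|\nabla v|^p\,dx$. On the right, the integrand $f = v\,\langle w, x/|x|\rangle$ lies in $L^1(B_R)$ by H\"older's inequality (as $v \in L^p$ and $|\nabla u|^{p-1}\in L^{p/(p-1)}$), so the coarea formula gives $\frac{1}{r-\rho}\int_{B_r \setminus B_\rho} f\, dx \to \int_{\partial B_r} f\, ds$ at every Lebesgue point of $s \mapsto \int_{\partial B_s} f\, ds$, hence for almost every $r < R$. This produces the identity $\int_{B_r}|\nabla v|^p\, dx = \int_{\partial B_r} v\,\langle w, \nu\rangle\, ds$ with $\nu = x/|x|$ the outward unit normal. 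Finally, $\langle \nabla u, \nu\rangle \le |\nabla u|$ gives $\langle w, \nu\rangle \le |\nabla u|^{p-1}$, and since $v$ is supported in $\{u > k\}$, where $|\nabla u| = |\nabla v|$, the boundary integrand is bounded by $|\nabla v|^{p-1}\,v$, which is the claim.

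The main obstacle I anticipate is the passage to the limit producing the surface integral for almost every $r$: one must combine the coarea formula with Lebesgue differentiation so that $\int_{\partial B_r}$ is meaningful — the radial traces of $v$ and of $|\nabla u|^{p-1}$ exist only for a.e. $r$ — and one should check that the restriction of $v$ and $\nabla u$ to $\partial B_r$ appearing in the final estimate is exactly the one furnished by the coarea decomposition. The interior steps, namely admissibility of $v\eta$ and the identity $\langle w, \nabla v\rangle = |\nabla v|^p$, are routine.
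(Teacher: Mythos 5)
Your proof is correct and follows essentially the same route as the paper: the paper also tests with $\eta\,(u-k)^+$ where $\eta$ is the radial ramp cutoff $\min\{1,(r-|x|)/\epsilon\}$ and lets $\epsilon\to 0$ to collapse onto $\partial B_r$, which is exactly your $\rho\to r$ limit. Your additional care with the coarea formula and Lebesgue points just makes explicit why the conclusion holds for almost every $r$, a point the paper leaves implicit.
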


\begin{proof}
For a formal proof, multiply equation \eqref{plaplace} by $(u-k)^+$, and use the divergence theorem. More carefully, we multiply equation \eqref{plaplace} with $\eta (u-k)^+$ where $\eta \in H_0^1 (B_r)$. This gives

\begin{align*}
0&= \int_{B_r} \eta (u-k)^+ \text{div} \left(  |\nabla u|^{p-2} \nabla u \right) \, dx \\
&= -\int_{B_r} \eta  |\nabla u|^{p-2} \left \langle  \nabla (u-k)^+ , \nabla u  \right \rangle \, dx  - \int_{B_r} (u-k)^+ |\nabla u|^{p-2} \left \langle  \nabla u, \nabla \eta \right \rangle \, dx,
\end{align*}
\begin{align*}
\int_{B_r} \eta |\nabla (u-k)^+ |^{p} \, dx \leq \int_{B_r} (u-k)^+ |\nabla (u-k)^+|^{p-1} |\nabla \eta| \, dx.
\end{align*}
Taking $$\eta= \eta_{\epsilon }(x) = \min\{1, \frac{r-|x|}{\epsilon} \}$$
and letting $\epsilon\rightarrow 0$ proves the Lemma. 

\end{proof}

\textbf{A modified Poincar\'e inequality.}
The following can be found in Theorem 3.16 in [G].

\begin{theorem}
Let $\Omega \subset \mathbb{R}^n$ be open and connected with a Lipschitz continuous boundary. For every $u \in W^{1,p}(\Omega)$, $p<n$, taking the value zero in a set $A$ with positive measure, we have
\begin{align*}
||u||_{p^*} \leq c \left( \frac{|\Omega|}{|A|}  \right)^{\frac{1}{p^*}}||\nabla u||_p,
\end{align*}
where $p^* = \frac{np}{n-p}$.
\label{poincarezero}
\end{theorem}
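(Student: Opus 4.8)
The plan is to reduce the statement to the classical Sobolev--Poincar\'e inequality by subtracting the mean value, and then to exploit the vanishing of $u$ on $A$ to control that mean. Write $\bar u = \frac{1}{|\Omega|}\int_\Omega u\,dx$ for the average of $u$ over $\Omega$. Since $\Omega$ is connected with Lipschitz boundary, the standard Sobolev--Poincar\'e inequality applies to $u$, giving
$$\|u - \bar u\|_{p^*} \leq C \|\nabla u\|_p,$$
with $C$ depending only on $n$, $p$ and the domain. This inequality, which encodes all the geometric information about $\Omega$, is the step I expect to be the main obstacle; everything afterwards is elementary.

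Next I would estimate the constant $\bar u$. Because $u$ vanishes on $A$, on that set $u - \bar u \equiv -\bar u$, so
$$|\bar u|\,|A|^{1/p^*} = \Big(\int_A |\bar u|^{p^*}\,dx\Big)^{1/p^*} = \Big(\int_A |u - \bar u|^{p^*}\,dx\Big)^{1/p^*} \leq \|u - \bar u\|_{p^*} \leq C\|\nabla u\|_p.$$
Hence $|\bar u| \leq C\, |A|^{-1/p^*}\|\nabla u\|_p$, i.e. the mean is forced to be small precisely when $A$ is large.

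Finally I would combine these estimates through the triangle inequality. Since $\bar u$ is constant on $\Omega$ we have $\|\bar u\|_{p^*} = |\bar u|\,|\Omega|^{1/p^*}$, and therefore
$$\|u\|_{p^*} \leq \|u - \bar u\|_{p^*} + |\bar u|\,|\Omega|^{1/p^*} \leq C\|\nabla u\|_p\Big(1 + \big(\tfrac{|\Omega|}{|A|}\big)^{1/p^*}\Big).$$
As $A \subset \Omega$ forces $|\Omega|/|A| \geq 1$, the factor in parentheses is at most $2\,(|\Omega|/|A|)^{1/p^*}$, which yields the claimed bound with a new constant $c$. The only genuine subtlety is ensuring that the Sobolev--Poincar\'e inequality in the first step holds on $\Omega$; this is exactly where the Lipschitz-boundary and connectedness hypotheses enter, and it is a standard consequence of the Sobolev embedding combined with an extension (or compactness) argument.
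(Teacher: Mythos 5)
Your proof is correct. One thing to be aware of: the paper itself never proves this theorem --- it is quoted verbatim with a pointer to Theorem 3.16 in [G] (Giusti's book) --- so your argument supplies what the paper leaves as a citation. Moreover, the route you take (subtract the mean $\bar u$, invoke the Sobolev--Poincar\'e inequality $\|u-\bar u\|_{p^*}\le C\|\nabla u\|_p$ on a bounded connected Lipschitz domain, use the vanishing of $u$ on $A$ to get $|\bar u|\le |A|^{-1/p^*}\|u-\bar u\|_{p^*}$, and finish with the triangle inequality and $|\Omega|/|A|\ge 1$) is exactly the standard proof of this statement, essentially the one given in Giusti's book, so your argument and the cited one coincide in substance. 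Two minor points you should make explicit: (i) you need $|\Omega|<\infty$ (in practice, $\Omega$ bounded) both for $\bar u$ to be defined and for the Sobolev--Poincar\'e constant to be finite; this is implicit in the statement since $|\Omega|$ appears in the bound. (ii) The constant $C$ in your first step depends on the geometry of $\Omega$, not only on $n$ and $p$; this is harmless for the paper's use of the theorem, which applies it only on balls $B_{2-\rho}$ with $\rho\in(0,1)$, where the constant can be taken uniform (both sides of the inequality scale identically under dilations, so the constant for a ball is independent of its radius), but it is worth stating so that the dependence of $c$ is not misread as purely dimensional.
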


\section{A differential equation}
Let 
\begin{align*}
g(\rho) = \int_{B_{2-\rho}} \left| \nabla (u-k\rho)^+ \right|^{p-1} (u-k\rho)^+ \, dx, \quad \rho \in (0,1).
\end{align*}
Our plan is to find a differential equation for $g$ and to eventually show that $g(1)=0$. If we can show that $g(1)=0$, the assumptions in Theorem \ref{osclemma} can be applied to get the desired bound.

Differentiating we find
\begin{align*}
-g'(\rho) &= \int_{\partial B_{2-\rho}} \left|  \nabla (u-k\rho)^+ \right |^{p-1} (u-k\rho)^+ \, dx  + k \int_{B_{2-\rho}} \left| \nabla (u-k\rho)^+  \right|^{p-1} \, dx \\
& \equiv a(\rho) + k b(\rho).
\end{align*}
For the differentiation, we rewrote $g$ as 
\begin{align*}
g(\rho) = \int_{B_{2-\rho}} \left| \nabla (u-k\rho)^+ \right|^{p-1} (u-k\rho)^+ \, dx = \int_{B_{2-\rho}} |\nabla u |^{p-1} (u-k\rho)^+ \, dx. 
\end{align*}

We want to connect $g'$ and $g$ to get an ordinary differential equation. By H\"older's inequality and the modified Caccioppoli inequality, Lemma \ref{cacc2}, we find
\begin{align*}
g^p & \leq  \left \{  \int_{B_{2-\rho}} \left |   \nabla (u-k\rho)^+    \right|^p \, dx    \right \}^{p-1} \int_{B_{2-\rho}} \left( (u-k\rho)^+ \right)^p \, dx \\
& \leq  \left\{ \int_{\partial B_{2-\rho}}  \left| \nabla (u-k\rho)^+ \right|^{p-1} (u-k\rho)^+  \, ds \right\}^{p-1} \int_{B_{2-\rho}} \left( (u-k\rho)^+ \right)^p \, dx \\
&=\left( a(\rho )\right)^{p-1} \int_{B_{2-\rho}} \left( (u-k\rho)^+ \right)^p \, dx.
\end{align*}

Let $$M=\sup_{B_4} u^+$$ and recall the Sobolev conjugate $$p^*=\frac{np}{n-p}.$$ Under the assumptions in the Oscillation Theorem, we apply Theorem \ref{poincarezero} to get
\begin{align*}
g^p &\leq a^{p-1} \int_{B_{2-\rho}} \left( (u-k\rho )^+   \right)^{p} \, dx \\
& \leq a^{p-1} M^{\frac{n+p-p^2 }{n-(p-1)}} \int_{B_{2-\rho}} \left( (u-k\rho)^+ \right)^{(p-1)^*} \, dx \\
& \leq CM^{\frac{n+p-p^2 }{n-(p-1)}}a^{p-1} \left\{  \int_{B_{2-\rho}} \left|  \left( \nabla (u-k\rho)^+ \right)     \right|^{p-1} \, dx \right \}^{\frac{(p-1)^*}{p-1}} \\
&= CM^{\frac{n+p-p^2 }{n-(p-1)}}a^{p-1} b^{\frac{n}{n-(p-1)}}.
\end{align*}

To relate $g$ to $g'=-(a+kb)$, we let $q>1$ and use Young's inequality with an $\epsilon$,
\begin{align*}
a^{p-1}b^{\frac{n}{n-(p-1)}} \leq \frac{1}{q} \left( \epsilon a^{p-1}  \right)^q  + \frac{q-1}{q} \left( \frac{b^{\frac{n}{n-(p-1)}}}{\epsilon} \right)^{\frac{q}{q-1}}.
\end{align*}
To match the exponents for $a$ and $b$, we take $q=1 + \frac{n}{(p-1)(n-(p-1))}.$ This gives
\begin{align*}
a^{p-1}b^{\frac{n}{n-(p-1)}} \leq \frac{1}{q}\epsilon^q \left( a^{(p-1)q} + (q-1)\epsilon^{-\frac{q^2}{q-1}}b^{(p-1)q} \right).
\end{align*}
We determine $\epsilon$ from  $$(q-1)\epsilon^{\frac{-q^2 }{q-1}} =k^{(p-1)q}$$
to match the derivative $g'$.
Then
\begin{align*}
a^{p-1}b^{\frac{n}{n-(p-1)}} &\leq \frac{1}{q}\epsilon^q \left( a^{(p-1)q} + \left(  kb \right)^{(p-1)q} \right) \leq  \frac{C\epsilon^q}{q} \left( a+ kb  \right)^{(p-1)q} \\
&=C_2\epsilon^q \left( -g'(\rho) \right)^{(p-1)q}.
\end{align*}
The differential inequality can then be written
\begin{align*}
\frac{g^{p}\epsilon^{-q}}{C_3 M^{\frac{n+p-p^2 }{n-(p-1)}}} \leq \left( -g' \right)^{(p-1)q}.
\end{align*}

Here, we insert the values for $\epsilon$ and $q$. For convenience, put $$\gamma=\gamma(n,p)= np-(p-1)^2>0.$$ Then

\begin{equation}
g' g^{\frac{p-1}{\gamma}-1} \leq - \frac{k^{\frac{n}{\gamma}}}{C_4 M^{\frac{n+p-p^2 }{\gamma}}}.
\label{diffeqnp}
\end{equation}

\section{Proof of the oscillation Theorem}

We are ready to prove Theorem \ref{osclemma} using the differential inequality \eqref{diffeqnp}. From the inequality, it follows that

\begin{align*}
\frac{d}{d\rho} \left( g(\rho) \right)^{\frac{p-1}{\gamma}} =\frac{p-1}{\gamma} g^{\frac{p-1}{\gamma}-1} g' \leq - \frac{k^{\frac{n}{\gamma}}}{C_5 M^{\frac{n+p-p^2 }{\gamma}}}.
\end{align*}

We claim that $g(1)=0$. The above differential inequality can be integrated from $\rho=0$ to $\rho=1$, provided $g(\rho) \neq 0$ for $\rho$ in this range. If $g(\rho)=0$ for some $\rho <1$, the claim is true, since $g' \leq 0$.  If not, we integrate from $0$ to $1$ to get
\begin{align*}
\left( g(1)\right)^{\frac{p-1}{\gamma}}- \left( g(0)\right)^{\frac{p-1}{\gamma}} \leq   -\frac{k^{\frac{n}{\gamma}}}{C_6  M^{\frac{n+p-p^2 }{\gamma}}}.
\end{align*}

By definition, $g(1) \geq 0$, so we want to exclude the case $g(1)>0$. If this is the case, we have
\begin{align*}
0< \left(  g(1)\right)^{\frac{p-1}{\gamma}} \leq \left( g(0)\right)^{\frac{p-1}{\gamma}}-\frac{k^{\frac{n}{\gamma}}}{C_6 M^{\frac{n+p-p^2 }{\gamma}}}.
\end{align*}
We choose $k$ such that the above is zero,
$$k= C_0 M^{\frac{n+p-p^2}{n}} g(0)^{\frac{p-1}{n}} = C_0 \left( \sup_{B_4} u^+ \right)^{\frac{n-p(p-1)}{n}}  \left \{  \int_{B_1} |\nabla u^+|^{p-1} u^+ \, dx  \right \}^{\frac{p-1}{n}}.$$
This gives the contradiction $0<g(1) \leq 0$, so we must have $g(1)=0$, i.e. 

\begin{align*}
\int_{B_1} \left| \nabla (u-k)^+   \right |^{p-1} (u-k)^+ \, dx=0.
\end{align*}

We can rewrite this to $$\int_{B_1}  \left | \nabla \left(  \left( (u-k)^+ \right)^{\frac{p}{p-1}}  \right)      \right|^{p-1} \, dx =0.$$

Then, by the assumptions of Theorem \ref{osclemma} we can use Theorem \ref{poincarezero} to get

\begin{align*}
\int_{B_1} \left( (u-k)^+   \right)^\frac{np}{n-(p-1)} \, dx & = \int_{B_1} \left(   \left( (u-k)^+   \right)^{\frac{p}{p-1}}           \right)^{(p-1)^*} \, dx \\
&\leq C \left\{ \int_{B_1}  \left | \nabla \left(  \left( (u-k)^+ \right)^{\frac{p}{p-1}}  \right)      \right|^{p-1} \, dx \right\}^{\frac{(p-1)^*}{p-1}}=0,
\end{align*}
which means that 
$$(u-k)^+ \equiv 0 \quad \text{in} \, B_1.$$
Hence, $\sup_{B_1} u ^+  \leq k. $
This gives

\begin{align*}
\sup_{B_1} u^+ &\leq k =  C_0 \left( \sup_{B_4} u^+ \right)^{\frac{n-p(p-1)}{n}}  \left \{  \int_{B_1} |\nabla u^+|^{p-1} u^+ \, dx  \right \}^{\frac{p-1}{n}} \\
& \leq C_0 \left( \sup_{B_4} u^+ \right)^{\frac{n-p(p-1)}{n}} \left( \sup_{B_4} u^+ \right)^{\frac{p-1}{n}} \left \{  \int_{B_1} |\nabla u^+|^{p-1}  \, dx  \right \}^{\frac{p-1}{n}} \\ 
&=  C_0 \left( \sup_{B_4} u^+ \right)^{\frac{n-(p-1)^2}{n}}  \left \{  \int_{B_1} |\nabla u^+|^{p-1}  \, dx  \right \}^{\frac{p-1}{n}}.
\end{align*}

By H\"older's inequality we find
\begin{align*}
\int_{B_2} \left |  \nabla u^+  \right |^{p-1} \, dx &= \int_{B_2} \left |  \nabla u^+  \right |^{p-1} \chi_{\{u>0\} \cap B_2} \, dx \\
& \leq \left \{  \int_{B_2} |\nabla u^+|^{p} \, dx \right \}^{\frac{p-1}{p}} \left \{  \int_{B_2} \chi_{\{u>0\} \cap B_2} \, dx \right \}^{\frac{1}{p}} \\
&= \left \{  \int_{B_2} |\nabla u^+|^{p} \, dx \right \}^{\frac{p-1}{p}} \left | \{ u>0 \} \cap B_2 \right |^{\frac{1}{p}},
\end{align*}
which gives 

\begin{align*}
\sup_{B_1} u^+ \leq C_0 \left | \{ u>0 \} \cap B_2 \right |^{\frac{p-1}{np}} \left( \sup_{B_4} u^+ \right)^{\frac{n-(p-1)^2}{n}}\left \{  \int_{B_2} |\nabla u^+|^{p}  \right \}^{\frac{(p-1)^2}{np}}.
\end{align*}
We use the ordinary Caccioppoli inequality, Lemma \ref{cacc1}, with $r=2$. Hence, 

\begin{align*}
\sup_{B_1} u^+ &\leq C_1 \left | \{ u>0 \} \cap B_2 \right |^{\frac{p-1}{np}} \left( \sup_{B_4} u^+ \right)^{\frac{n-(p-1)^2}{n}} \left\{ \int_{B_4} (u^+)^p \, dx   \right\}^{\frac{(p-1)^2}{np}} \\
& \leq C_2 \left | \{ u>0 \} \cap B_2 \right |^{\frac{p-1}{np}} \sup_{B_4} u^+,
\end{align*} 
which proves the Oscillation Theorem.

\textbf{Comment} It has come to my attention that a similar proof has appeared in a  paper by Tiziano Granucci in [Gr].

\end{document}